\newtheorem{thm}{Theorem}
\newtheorem{lem}[thm]{Lemma}
\newtheorem{cond}{Condition}
\theoremstyle{definition}
\DeclareMathAlphabet{\mathsc}{OT1}{cmr}{m}{sc}
\newcommand\cB{{\mathscr B}}
\newcommand\cC{{\mathscr C}}
\newcommand\cJ{{\mathcal J}}
\newcommand\cK{{\mathscr K}}
\newcommand\cL{{\mathscr L}}
\newcommand\cM{{\mathcal M}}
\newcommand\cT{{\mathcal T}}
\newcommand\bC{{\mathbb C}}
\newcommand\bN{{\mathbb N}}
\newcommand\bR{{\mathbb R}}
\newcommand{\norm}[1]{\left\lVert{#1}\right\rVert}
\newcommand{\dr}[1]{\operatorname{d}_{r}({#1})}
\newcommand{\de}[1]{\operatorname{d}_{E}({#1})}
\newcommand{\db}[1]{\operatorname{d}_{B}({#1})}
\newcommand{\dep}[1]{\operatorname{d}'({#1})}
\begin{document}
\author{Oliver Butterley}
\address{Oliver Butterley\\ Faculty of Mathematics, Universit\"at Wien,
Nordbergstra{\ss}e 15,
1090 Wien, Austria}
\email{oliver.butterley@univie.ac.at}
\author{Carlangelo Liverani}
\address{Carlangelo Liverani\\
Dipartimento di Matematica\\
II Universit\`{a} di Roma (Tor Vergata)\\
Via della Ricerca Scientifica, 00133 Roma, Italy.}
\email{{\tt liverani@mat.uniroma2.it}}
\title{Robustly invariant sets in fibre contracting bundle flows}
\keywords{Bunbdle flows, Transfer operator, resonances, differentiability SRB}
\subjclass[2000]{37C30, 37D30, 37M25}
\thanks{The authors are grateful for the constructive comments of the anonymous referees which, after some work, led to a substantial improvement and generalisation of the argument. Also, they acknowledge the support of the  ERC Advanced Grant MALADY (246953).}
\bibliographystyle{abbrv}
\maketitle
\thispagestyle{empty}
\begin{abstract}
We provide abstract conditions which imply the existence of a robustly invariant neighbourhood of a global section of a fibre bundle flow. We then apply such a result to the  bundle flow generated by an Anosov flow when the fibre is the space of jets (which are described by local manifolds). As a consequence we obtain sets of manifolds (e.g. approximations of stable manifolds) that are left invariant, {\bf for all} negative times, by the flow and its small perturbations. Finally, we show that the latter result can be used to easily fix a mistake recently uncovered in the paper {\em Smooth Anosov flows: correlation spectra and stability},  \cite{BuL}, by the present authors.
\end{abstract}

\section{Introduction}
The standard definition of Anosov map or flow states that there must be an invariant splitting that is eventually contracted or expanded \cite{KH}. It is well known that one can change the Riemannian metric so as to insure that the hyperbolicity (expansion and contraction) takes place for {\bf all times} and not just after a fixed time (e.g. see \cite{mather}). Unfortunately, the new metric is usually constructed via the invariant splitting and hence has its same regularity (typically only H\"older). Also, if one considers a small perturbation of the flow or the map one has strict hyperbolicity with respect to a different metric. On the contrary, in many recent applications (e.g. the perturbation theory developed in \cite{GL} and applied to flows in \cite{BuL}) it would be very convenient to have strict hyperbolicity for an open set of systems with respect to a fixed (possibly smooth) distance. 

In fact, as typically one establishes hyperbolicity by finding a family of strictly invariant cones, it would be very useful to be able to specify a family of invariant cones   that is invariant also for small perturbations of the map or flow and has a uniform contraction and expansion for all the vectors in the cone (also for the perturbed dynamics). Moreover, as for many applications one would like to consider sets of approximate unstable manifolds and given that such sets can be conveniently specified in the language of jets, it is tempting to state the results in terms of fibre bundles, so that a manifold of different situations would be treated all at once. 

It turns out that it is fairly easy to carry out the above program in the case of discrete time. The reason being that the standard strategy provides, at time one, a fixed amount of contraction or expansion and hence a small perturbation (needed to make the metric smooth or to consider near by dynamics) will conserve such a property. Completely different is the situation for flows. The problem being the small times. For such times the amount of hyperbolicity is very small and any perturbation may destroy it. It is then not obvious if the above program can be carried out for flows or not. Here we show that it can be done. This is a consequence of the general considerations put forward in section \ref{sec:theory}. We then show in section \ref{sec:application} how to apply the abstract theory to the case of Anosov flows.

The problem of small times is rather ubiquitous when attempting to study fine properties of flows. In particular, in the theory developed in \cite{BaL, GLP} it is solved by an ad hoc construction that has the draw back of using explicitly the dynamics, hence tying the theory to a fixed flow. The problem is also present, but {\bf mistakenly} ignored, in \cite{BuL}. Indeed, Lemma~7.2 of  \cite{BuL} was stated to hold for all $t\geq 0$ but in fact it is only true for all $t$ greater than some non-zero constant. This affects all the paper since \cite{BuL} reduces to the study of the transfer operator, associated with the dynamics, on appropriate Banach  spaces. The norms defining such Banach spaces are based on a set of {\em admissible leaves} that are required to be {\em invariant}, with respect to the dynamics, in the sense of \cite[Lemma 7.2]{BuL}. Since the set of leaves defined in \cite{BuL} is invariant only after some finite time (contrary to the statement of \cite[Lemma 7.2]{BuL}) no control was obtained on the transfer operator for short time. Unfortunately, this problem cannot be fixed as in \cite{BaL}, since in \cite{BuL} one aims at doing perturbation theory and hence one needs a Banach space (on which to study the generator of the flow) which is adapted to an open set of the flows rather than a specific one as in \cite{BaL, GLP}. In section ~\ref{sec:leaves} we show that the present results yield an easy and elegant fix to the above mistake. After such a correction all the results in \cite{BuL} remain true as claimed. Finally, in Appendix \ref{app:strict} we substantiate a claim of \cite{BuL} that, although irrelevant for the results proved there, might have an interest in its own.

\section{Flows on Fibre Bundles}\label{sec:theory}
Let $(E,B,\pi,F)$ be a fibre bundle\footnote{As usual $E$ denotes the total space, $B$ denotes the base space, $\pi: E\to B$ is a continuous surjection, and $F$ denotes the fibre.}
 where $(E, \operatorname{d}_{E})$ and  $(B, \operatorname{d}_{B})$ are both metric spaces and $\pi$ is Lipschitz.\footnote{\label{ft:pi}If one has the natural choice of metric in the base space defined by $\db{p,p'}= \inf\{ \de{x,x'} : p=\pi x, p'=\pi x'\}$ then the Lipschitz property of $\pi$ is immediate.}
 We also assume that there exists an open subset $E_{0} \subset E$ with compact closure and a global\footnote{A section is a continuous map  such that $\pi \circ s(p) = p$ for all $p\in B$. Fibre bundles do not in general have global sections.} section $s: B \to E_{0}\subset E$. The section $s$ is assumed to be Lipschitz. We require that $E_{0}$ contains some neighbourhood\footnote{I.e. $\{x\in E: \de{x, s(B)}<\delta\} \subset E_{0}$ for some $\delta>0$.}  of the set $s(B)$.
Let 
\[
\Psi^{t}: E \to E
\]
 be a bundle flow. This is a jointly continuous\footnote{As a map $  E \times [0,\infty) \to E$.} map $(x,t) \mapsto \Psi^{t}(x) $ such that $\Psi^{0} = \operatorname{Id}$, $\Psi^{s}\circ\Psi^{t}=\Psi^{t+s}$, for each $t$ the map $\Psi^{t}:E \to E$ is Lipschitz, and  there exists  $\varphi^{t}:B\to B$ such that $\varphi^{t}\circ \pi = \pi \circ \Psi^{t}$ for all $t\geq 0$. This means that the flow preserves fibres. We require the set $E_0$ to be eventually invariant, i.e. there exists $t_0>0$ such that $\Psi^{t_{0}} E_{0} \subseteq E_{0}$.
 We also assume that there exists $C>0$ such that, for all $t>0$,\footnote{ Note that, by the semigroup property, the invariance of $E_0$ and the triangle inequality, it suffices that \eqref{eq:Cflow} holds for $t\in[0,t_0]$.}
 \begin{equation}\label{eq:Cflow}
 \sup_{p\in E_0}\de{p,\Psi^{t}p}\leq Ct.
 \end{equation}
Finally, we are interested in the case when  $\Psi^{t}$ is a fibre contraction in the sense that there exists $\sigma <1$, such that 
\begin{equation}\label{eq:contraction}
\de{\Psi^{t_{0}}x,\Psi^{t_{0}}x'} \leq \sigma \de{x,x'}, \quad \text{for all $x,x' \in  E_{0}$, $\pi x = \pi x'$}. 
\end{equation}
As we are interested in robust properties, we wish to consider an open set of flows.
We say that two bundle flows $\Psi$, $\tilde \Psi$ are $\eta$-close if 
\begin{equation}\label{eq:perturb}
\de{\Psi^{t} x, \tilde \Psi^{t} x} \leq \eta t
\quad \text{ for all $x\in E_0$, $t\in[0,t_0]$}.
\end{equation}

The purpose of this section is to prove the following result concerning the construction of a robust strictly-invariant set.
\begin{thm}\label{thm:theory}
Suppose that $\Psi^{t}: E \to E$ is a fibre contracting bundle flow as above. Then there exists a set $K\subset E_0$, and $\eta>0$ such that $\tilde\Psi^{t}K \subseteq K$ for all $t\geq 0$, and  for all $\tilde \Psi$ which are $\eta$-close to $\Psi$.
\end{thm}
\noindent
The remainder of this section is devoted to the proof of the above Theorem.

First note that the fibre contraction \eqref{eq:contraction} and the pre-compactness of $E_0$ implies that there exists $\lambda>0$, $C\geq 1$ such that
 \begin{equation}\label{eq:contraction2}
  \de{  \Psi^t x, \Psi^t x'   } \leq C e^{-\lambda t} \de{x, x'}
  \quad \quad \text{  for all $x,x' \in E_{0}$, $\pi x =\pi x'$, $t\geq 0$}.
 \end{equation}
We would like the bundle map to be a strict fibre contraction. We therefore introduce an adapted metric. Fix some $  \lambda' \in (0, \lambda)$, $t_{1}>0$ sufficiently large so that $Ce^{-(\lambda-\lambda')t_{1}}<1$ and let
\begin{equation}\label{eq:new-metric}
\dep{x,x'} = \int_{0}^{t_{1}} e^{\lambda's} \de{ \Psi^{s} x, \Psi^{s} x'} \ ds.
\end{equation}
\begin{lem}\label{lem:strictcontraction}
 $
  \dep{  \Psi^t x, \Psi^t x'   } \leq e^{-\lambda' t} \dep{x, x'}$
  for all $t\geq 0$, and $x,x' \in E_{0}$,    $\pi x = \pi x'$.
\end{lem}
\begin{proof} 
It suffices to prove the Lemma for $t\in [0,t_1]$. Using the definition of the adapted metric and the estimate \eqref{eq:contraction2} we have
\[
 \begin{aligned}
  \dep{  \Psi^t x, \Psi^t x'   } 
    & = \int_{t}^{t+t_{1}} e^{\lambda' (s-t)} \de{ \Psi^{s} x, \Psi^{s} x'} \ ds\\
    & = e^{-\lambda't} \dep{x, x'} \\
    & \quad +   \int_{0}^{t} e^{\lambda' (s-t)}\left[ e^{\lambda' t_1} \de{ \Psi^{s+t_1} x, \Psi^{s+t_1} x'} -  \de{ \Psi^{s} x, \Psi^{s} x'}\right] \ ds\\
    & \leq e^{-\lambda't} \dep{x, x'} \\
    & \quad + \left[ e^{\lambda' t_1} C e^{-\lambda t_1}   -1 \right]   \int_{0}^{t} e^{\lambda' (s-t)}  \de{ \Psi^{s} x, \Psi^{s} x'}\ ds.
 \end{aligned}
\]
Since $Ce^{-(\lambda-\lambda')t_{1}}<1$ this completes the proof of the lemma.
\end{proof}

\begin{lem}\label{lem:equivalent}
The metrics $\de{\cdot,\cdot}$ and $\dep{\cdot,\cdot}$ are equivalent.
\end{lem}
\begin{proof}
The continuity of the flow, combined with the semigroup properties implies that there exists $C>0$ such that $\de{\Psi^{t}x,\Psi^{t}x'} \leq C \de{x,x'} $ for all $x,x' \in E$, $t\in [-t_{1},t_{1}]$.
This estimate, along with the definition of $\dep{\cdot,\cdot}$ suffices.
\end{proof}

Note that the invariance of $E_0$ and the fibre contraction imply the existence of an invariant global section. Yet, such a section is in general only continuous (e.g. see \cite[Theorem ($3.1'$)]{HPS}). Typically one expects to be able to produce smooth global sections by a smoothing of the invariant one, yet in the present generality it is not obvious how to do it. Hence our assumption on the existence of a global Lipschitz section $s: B \to E_{0}$.

Nevertheless, in the following we  need to have a section close to the invariant one, while still being Lipschitz. This can be easily constructed.
Let   $s_{0} = \Psi^{t_{2}}\circ s \circ \varphi^{-t_{2}} :  B \to E$ for some $t_{2}>0$ chosen sufficiently large as specified shortly.  The section $s_0$ is almost invariant as follows.
\begin{lem}\label{lem:nearinvariant}
For all $\epsilon>0$ there exists $t_2>0$,  and $C_{\epsilon}>0$, such that
\[
\dep{s_{0}\circ\varphi^{t}(p), \Psi^{t}\circ s_{0}(p)} \leq  \epsilon t
\quad \quad
\text{for all $t>0$, $p\in B$}.
\] 
and
$\dep{s_{0}(p),s_{0}(p')} \leq C_{\epsilon} \operatorname{d}_{B}(p,p')$
for all $p,p'\in B$.
\end{lem}
\begin{proof}
Let $t_2 t_0^{-1}\in\bN$ and set $p' =  \varphi^{-t_{2}} (p)$. Using the estimate of Lemma~\ref{lem:strictcontraction} 
\[
\begin{aligned}
\dep{s_{0}\circ\varphi^{t}(p), \Psi^{t}\circ s_{0}(p)} 
& = \dep{ \Psi^{t_{2}}[ s \circ\varphi^{t}(p')], \Psi^{t_{2}}[  \Psi^{t}\circ s (p')]}\\
& \leq e^{-\lambda' t_{2}} \dep{s \circ\varphi^{t}(p'),   \Psi^{t}\circ s (p')}.
\end{aligned}
\]
The linear growth of the flow \eqref{eq:Cflow} and the Lipschitz continuity of $\pi$ means that we have a similar linear growth estimate for $\varphi^{t}:B\to B$. I.e. $\db{x, \varphi^{t}x} \leq Ct$ for all $t>0$ and $x \in B$. 
Now note that 
\[
 \de{s \circ\varphi^{t}(p),   \Psi^{t}\circ s (p)} \leq  \de{s (p),   \Psi^{t}\circ s (p)} +  \de{s \circ\varphi^{t}(p),  s (p)}.
\]
Using the linear growth and the Lipschitz continuity of $s$ we therefore have that 
\[
 \de{s \circ\varphi^{t}(p),   \Psi^{t}\circ s (p)}\leq C t.
 \]
The equivalence of the metrics (Lemma~\ref{lem:equivalent}) means that a similar estimate holds for $\dep{\cdot,\cdot}$.
Choosing $t_{2}$ sufficiently large we ensure that $Ce^{-\lambda' t_{2}}  \leq \epsilon$, which proves the first inequality of the Lemma.

Since, by hypotheses, $B$ is compact and $\varphi^t$  Lipschitz, it follows by the semigroup property and the linear growth  that there exists $C_1>0$ such that $\db{\varphi^t(p),\varphi^t(p')}\leq e^{C_1t}\db{p,p'}$.
Using the latter inequality and Lemma \ref{lem:strictcontraction} the final statement of the lemma follows. Note that it is to be expected that $C_{\epsilon}$ becomes larger as $\epsilon$ is chosen smaller.
\end{proof}

We are now in a position to define the strictly-invariant set.
Let
\[
K = \left\{  x \in E : \dep{x, s_{0}( \pi x)} \leq \tau   \right\}
\]
for some $\tau>0$ which is chosen sufficiently small so that $K \subset E_{0}$.\footnote{ This is possible since $s_0(B)\subset E_0$, by construction, and is a compact set.}

\begin{proof}[Proof of Theorem~\ref{thm:theory}]
We must show that $x\in K$ implies that $\tilde\Psi^{t}x \in K$. By the semigroup property of the flow it suffices to prove the result for  all $t\in [0,t_{3}]$ for some fixed, possibly small, $t_3>0$. 
We estimate
\[
\begin{aligned}
\dep{\tilde\Psi^{t}x, s_{0}\circ \pi( \tilde\Psi^{t}x)}
& \leq         \dep{\tilde\Psi^{t}x, \Psi^{t}x}
		+\dep{ \Psi^{t}x, \Psi^{t}\circ s_{0}\circ \pi( x)}\\
& \ \	+\dep{ \Psi^{t}\circ s_{0}\circ \pi( x), s_{0}\circ \varphi^{t} \circ \pi (x)} \\
& \ \ 	+\dep{s_{0}\circ  \pi (\Psi^{t} x), s_{0}\circ \pi( \tilde\Psi^{t}x)} \\
& \leq C\eta t + e^{-\lambda' t} \tau + t \epsilon + t C_{\epsilon}\eta. 
\end{aligned}
\]
In the last line we used \eqref{eq:perturb} and Lemma~\ref{lem:equivalent}, Lemma~\ref{lem:strictcontraction}, and Lemma~\ref{lem:nearinvariant} for the first three terms respectively. The final term is a combination of \eqref{eq:perturb}, Lemma~\ref{lem:nearinvariant}, the Lipschitz continuity of $\pi$, and \eqref{eq:perturb}.
We now choose $\epsilon =  \lambda' \tau /3$ (this means that $C_{\epsilon}$ is now possibly quite large). Now choose $\eta>0$ sufficiently small so that $(C_{\epsilon} +C)\eta \leq \lambda' \tau/3$. This means that $\dep{\tilde\Psi^{t}x, s_{0}\circ \pi( \tilde\Psi^{t}x)} < \tau$ for $t> 0$ sufficiently small which means that $\tilde\Psi^{t}x \in K$ as required. 
\end{proof}

\section{Robustly Invariant sets in Jet Spaces}\label{sec:application}

Here we apply the abstract results of the previous section to the setting of a $\cC^{r+1}$ Anosov flow
$T_{t} : \cM \to \cM$ for some Riemannian manifold $\cM$. In other words the flow must satisfy the following condition.
\begin{cond}[Anosov Flow]\label{anosov}
At each point $x\in \cM$ there exists a splitting of tangent space $T_x\cM
= E^s(x) \oplus E^f(x) \oplus E^u(x)$, $x\in\cM$. The splitting is
continuous and invariant with
respect to $T_t$. $E^f$ is one dimensional and coincides with the flow
direction. In addition, there exists $C, \lambda>0$ such that
\begin{equation}\label{eq:anosov}
\begin{aligned}
\norm{DT_t v} \leq C e^{- \lambda t} \norm{v} \ \mbox{for each
  $v \in E^s$ and $t \geq 0$},\\
\norm{DT_{-t} v} \leq C e^{- \lambda t} \norm{v} \ \mbox{for each
  $v \in E^u$ and $t \geq 0$},\\
  C^{-1}\norm{v}\leq \norm{DT_{t} v}\leq C\norm{v}\ \mbox{for each
  $v \in E^f$ and $t \in\bR$} .
\end{aligned}
\end{equation}
\end{cond}

First we introduce jet spaces. Given two $d_{s}$-dimensional hypersurfaces containing $p\in \cM$ we say that they are {\em equivalent} if they have an $r$\textsuperscript{th}-order contact at $p$.
For each $p$ the space of $r+1$-jets is denoted $\cJ_{p}$ and is defined to be the equivalence classes of all $d_{s}$-dimensional hypersurfaces containing $p$.  The hypersurfaces  are said to \emph{represent} the jet. 
The jet bundle is the set $\cJ:= \{(p, j):p\in \cM, j \in \cJ_{p}\}$. The space $\cJ$ is a metric space with the metric $\dr{\cdot,\cdot}$ defined as the inf of the $\cC^{r+1}$ distance between hypersurfaces which represent the jets.
This depends on the choice of coordinate charts and so we fix, once and for all, the choice of coordinate charts and use a partition of unity to insure that the metric varies smoothly from point to point on $\cM$. In fact, it is convenient to have co-ordinates charts in which all the relevant hypersufaces can be represented as graphs, then the $\cC^r$ distance would be simply the $\cC^r$ norm of a function. See \cite[Section 3]{BuL} for an explicit choice of such co-ordinates.

Note that every foliation of some region of $\cM$ into $\cC^{r+1}$ $d_{s}$-dimensional hypersurfaces gives rise to a local section in $\cJ$ but not every section can be realised by a foliation of hypersurfaces. We have so defined our fibre bundle.
 There is an induced flow on the jet bundle $\Psi^{t}: \cJ \to \cJ$ 
which we write as 
\[
\Psi^{t}:(p, j) \mapsto \left(T_{-t} (p), \psi_p^t j\right). 
\]
If one considers a hypersurface through the point $p\in \cM$ which is a representation of a jet $j$ then  $\psi_p^t j$ is simply the jet which is represented by the image, under $T_{-t}$, of that hypersurface.
Notice that we are considering the flow in backwards time because we are ultimately interested in almost-stable objects. 

 By standard hyperbolic theory there exists a stable conefield (in tangent space) which we denote by $\bar \cK_p$ at each $p\in \cM$ and which is  invariant under $D_pT_{-t_{0}}$ for some $t_{0}>0$.
We can naturally view $\bar \cK_p$ as a subset $\cK_p$ of $\cJ_p$.  
Hypersurfaces which represent the elements of $\cK_p$ are the hypersurfaces for which the tangent space at $p$ belongs to $\cK_p$. Let $\cK = \{ (p,j): p\in \cM, j \in \cK_{p}  \} \subset \cJ$. It is a standard result of hyperbolic theory that there exists $\sigma \in (0,1)$  such that, perhaps increasing $t_{0}$ if required,\footnote{ For example, it follows from the last two formulae of \cite[Appendix 1]{BuL}.}
\begin{equation}\label{eq:surfaces}
\dr{\Psi^{t_{0}}j,\Psi^{t_{0}}j'} \leq \sigma \dr{j,j'}, \quad \text{for all $j,j' \in \cK_{p}$, $p\in \cM$}. 
\end{equation}

We now wish to apply the results of Section~\ref{sec:theory} to this setting and so we choose $(\cJ, \operatorname{d}_{r})$ as $(E,\operatorname{d}_{E})$, $\cK$ as $E_{0}$, $\cM$ as $B$, and $\operatorname{d}_{B}$ as explained in Footnote~\ref{ft:pi}. 
The existence of a Lipschitz section ($s:\cM \to \cJ$) is easily achieved by taking the section which corresponds to the invariant stable foliation and then smoothing it. 
That the vector field associated to the Anosov flow has bounded $\cC^{r+1}$ norm implies that \eqref{eq:Cflow} is satisfied. The estimate \eqref{eq:surfaces} implies that \eqref{eq:contraction} is satisfied.
Suppose that $\tilde T_{t}:\cM \to \cM$ is another flow and that $V$ and $\tilde V$ are the corresponding vector fields of the two flows. Suppose furthermore that the $\cC^{r+1}$ distance between the two vector fields is less than $\zeta$ for some $\zeta>0$. Then there exists $t_1>0$,  $C>0$ such that
 \[
  \dr{\Psi^t j, \tilde\Psi^t j} \leq C\zeta t
  \quad \text{for all $t\in [0,t_1]$, $j \in \cK$},
  \]
 where $\tilde\Psi^t$ is the bundle flow  corresponding to  $\tilde T_{t}$. This means that \eqref{eq:perturb} is satisfied. 
 All the above means that Theorem~\ref{thm:theory} implies the following.
 \begin{thm}\label{thm:invariance}
 Suppose that $T_{t} : \cM \to \cM$ is a $\cC^{r+1}$ Anosov flow and $\cJ$ denotes the jet bundle associated to $d_{s}$-dimensional submanifolds of $\cM$ as described above. Then there exists $\cJ_{0} \subset \cJ$ and $\eta>0$ such that $\tilde \Psi^t \cJ_{0} \subset \cJ_{0}$ for all $t\geq 0$ and jet bundle flows $\tilde \Psi^t$  associated to a flow which is $\eta$-close to $T_{t}$.
 \end{thm}

\section{Errata Corrige to ``Smooth Anosov Flows: Correlation Spectra and Stability"}\label{sec:leaves}
In this section we show that the above theory provides a nice fix to the error in~\cite{BuL} explained in the introduction. 

Before proceeding, let us remark that the problem in \cite{BuL} can be eliminated in several ways. As already mentioned, a simple solution (used in \cite{BaL, GLP}) is to keep the definition of leaves as in \cite{BuL} and use the dynamics explicitly in the definition of the norm (by taking the new norm to be the sup of the old ones over some time interval). Unfortunately, this is not suited for the task at hand as we are interested also in perturbations of a given dynamics, hence we do not wish to have a norm tied  too closely to the dynamic. One could probably fix the latter issue by taking the sup not only on time but also on a neighbourhood of time dependent dynamics. Yet, this would make the definition of the space rather cumbersome. Instead, we choose to redefine the set of admissible leaves by using the result in section \ref{sec:application}. This has the merit of better elucidating  the geometric properties of the dynamics and could be useful in related problems. Regrettably, in so doing we lose the compact embedding between the Banach spaces \cite[Lemma~2.2]{BuL}, which previously held thanks to \cite[Lemma~2.1]{GL}. The latter does not apply given the new definition of the leaves since it uses in a fundamental way the fact that, in charts, the tangent space of the leaves all belong to the same fixed cone. Nevertheless, it is easy to recover the needed compactness at the level of resolvent operators and hence obtain a correct proof of the results in \cite{BuL}.

Before starting let us note that \cite[Condition 2]{BuL} requires a strict contraction and expansion for all times for stable and unstable tangent vectors. This is used in the (incorrect) construction of the set of invariant leaves. In fact, what  is actually needed is strict expansion in backward time for vectors belonging to the tangent space of the leaves. It is always possible to change the norm in such a way that \cite[Condition 2]{BuL} is satisfied but the strategy suggested to do so in \cite[Footnote 1]{BuL} is too na\"ive. At most, eliminating the absolute value in the exponent, one obtains a norm that contracts strictly along the stable direction. A  norm with the claimed behaviour can be constructed as in  \cite{mather}, but then it has poor smoothness properties. For the reader's convenience and for completeness we show how to construct a smooth strictly-hyperbolic norm in Appendix \ref{app:strict} (at the price of not having the optimal contraction rate nor the orthogonality between invariant subspaces). Here, we content ourselves with a norm that has a strict backward expansion. This suffices for our present needs. Given the new construction of the set of admissible leaves provided shortly, the results of \cite{BuL} holds under  Condition \ref{anosov}, without worrying about adapted norms.

\subsection{ A strictly expanding metric in tangent space}
Let, as in \cite{BuL}, $\cT_\eta$ be the set of flows $\tilde T_t$ defined by vector fields closer than $\eta$ (in the $\cC^{r+1}$ topology) to the vector field of $T_t$.
By Condition \ref{anosov} we have that, for each $\lambda'\in (0,\lambda)$, there exists $\eta_0>0$ such that there exists $C_1>0$ such that, for all $\tilde T_t\in\cT_{\eta_0}$, we have
\[
\norm{\smash{D\tilde T_{-t} v}}\geq C_1e^{\lambda' t} \norm{v}\quad\textrm{ for all $t\geq 0$, $p\in\cM$ and $v\in \bar\cK_p$}.
\]
In analogy with \eqref{eq:new-metric} (and consistently with the aforementioned revision of \cite[Footnote 1]{BuL}) we can define, for some $\lambda''\in (0,\lambda')$,
\[
\norm{v}'=\int_0^{t_4}e^{-\lambda'' s}\norm{DT_{-s}v}ds.
\]
\begin{lem}\label{lem:expansion} There exists $t_4>0$ such that, for all $p\in \cM$, $v\in \bar\cK_p$, $\tilde T_t\in\cT_{\eta_0}$ and $t>0$, we have
\[
\norm{\smash{D\tilde T_{-t}v}}'\geq e^{\lambda''t}\norm{v}'.
\]
\end{lem}
\begin{proof}
The proof is identical to the proof of Lemma \ref{lem:strictcontraction} (apart from the inverse sense of the inequality) and holds provided $C_1e^{(\lambda'-\lambda'')t_4}>1$.
\end{proof}
From now on we will use exclusively such a norm and we will suppress the prime to ease notation.

\subsection{ The set of admissible leaves}
We use Theorem \ref{thm:invariance} for $r+1$-jets. Note that, by construction, the jets in $\cJ_0$ at each $p$ are represented by manifolds whose tangent space at $p$ belongs to $\bar\cK_p$.

We call a $d_s$-dimensional $\cC^{r+1}$ manifold $W$ {\em pre-admissible} if for each point $p\in W$ there exists a neighbourhood of $p$ such that the restriction of $W$ to such a neighbourhood represents a $r+1$-jet in $\cJ_0$. Let $\Sigma_0$ be the set of pre-admissible manifolds. Note that each manifold $W\in\Sigma_0$ has a natural Riemannian structure induced by the one of $\cM$. We can, and will, then talk about balls in $W$ determined by such an induced metric.
Given a fixed $\delta\in (0,1)$ and $R>1$, to be chosen shortly, we call the manifold $W$ {\em admissible} if
\begin{enumerate}
\item $W\in\Sigma_0$;
\item $W$ contains a ball of size $\delta$;
\item $W$ has diameter smaller than $R\delta$. 
\end{enumerate} 
Let $\Sigma$ be the collection of admissible manifolds (leaves).
Here we come to the basic result of this errata: to restrict the set of allowed manifolds so that it has the wanted invariance also for small times and so that the property persists under perturbations.
\begin{lem}
\label{lem:partunity}
There exists $\eta_0>0$ such that, for each $\tilde T_{t}\in\cT_{\eta_0}$, leaf $W\in\Sigma$ and $t\in\bR^+$, there exist
leaves $W_1,\dots, W_\ell\in\Sigma$, whose number $\ell$ is bounded by a
constant depending only on $t$, such that
\begin{enumerate}
\item $\tilde T_{-t}(W) \subset \bigcup_{j=1}^\ell W_j$.
\item $\tilde T_{-t}(W^{+}) \supset \bigcup_{j=1}^\ell W_j^+$.
\item There exists a constant $C$ (independent of $W$ and $t$) such that each
point of $\tilde T_{-t}W^{+}$ is contained in at most $C$ sets $W_j$.
\item There exist functions $\rho_1,\ldots, \rho_\ell$ of class
$\cC^{r+1}$, $\rho_j$ compactly supported on $W_j$, such that $\sum \rho_j=1$
on $\tilde T_{-t}(W)$, and $| \rho_j|_{\cC^{r+1}} \leq C$.
\end{enumerate}
\end{lem}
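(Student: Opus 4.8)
The plan is to reduce the statement to a covering argument on the image manifold $T_{\eta,-t}W^+$, using the invariance of $\widetilde\Sigma_0$ under $T_{\eta,-t}$ (which holds for all $t>0$ by Lemma~\ref{lem:good-exp}, and persists under the perturbation since the cone contraction in that Lemma is stated for the whole family $\cT_{\eta_0}$). First I would observe that, by Lemma~\ref{lem:good-exp}, $T_{\eta,-t}W\subset T_{\eta,-t}W^+$ are both in $\widetilde\Sigma_0$; the only thing to arrange is to chop the (possibly large) manifold $T_{\eta,-t}W$ into pieces $W_1,\dots,W_\ell$ that satisfy the rigid metric normalization defining $\Sigma_0$, namely that each $W_j$ sits inside an extension $W_j^+\in\widetilde\Sigma_0$ with $W_j=(W_j^+)_{2\delta}$ and $W_j^+=(W_j^+)_{B\delta}$ in the induced metric $d_{W_j}$.

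The key step is a volume/packing estimate. Since the cone field $C^*_\rho$ has uniformly bounded aperture and the metric $\langle\cdot,\cdot\rangle_2$ is uniformly equivalent to the original one, every $W\in\widetilde\Sigma_0$ has induced metric with uniformly bounded geometry (curvature, injectivity radius bounded below on the scale of $B\delta$), and hence the dynamics $T_{\eta,-t}$ distorts $d$-balls by a factor controlled solely by $t$ (via \eqref{eq:short-time-est} for small $t$ and the hyperbolicity estimates for large $t$). Therefore I would: (i) fix a maximal $\delta$-separated set $\{x_j\}$ of centers in $T_{\eta,-t}W$ with respect to $d_{T_{\eta,-t}W^+}$; (ii) let $W_j^+$ be the $d$-ball of radius $B\delta$ around $x_j$ inside $T_{\eta,-t}W^+$, and $W_j=(W_j^+)_{2\delta}$; (iii) check $W_j^+\in\widetilde\Sigma_0$ because it is an open submanifold of $T_{\eta,-t}W^+\in\widetilde\Sigma_0$ with $\overline{(W_j^+)_r}\subset W_j^+$ for $r<B\delta$, which holds provided $B\delta$ is below the uniform injectivity radius so the metric ball is genuinely ``interior'' — this is where choosing $B$ large but $\delta$ correspondingly small, and $t$ in a compact range at a time, matters. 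Maximality of the separated set gives (1): the balls $W_j=(W_j^+)_{2\delta}$ cover $T_{\eta,-t}W$ since every point is within $\delta<2\delta$ of some $x_j$. Property (2) is automatic as each $W_j^+\subset T_{\eta,-t}W^+$. Property (3), the bounded multiplicity, follows from the standard fact that a $\delta$-separated set meets any $d$-ball of radius $B\delta$ in at most $N=N(B,d_s)$ points, using again the bounded geometry of $T_{\eta,-t}W^+$ — note $N$ depends on $B$ and the dimension but not on $W$ or $t$, so $C$ is uniform; the count $\ell$ itself depends on $t$ through the volume of $T_{\eta,-t}W$, hence ``bounded by a constant depending only on $t$''.

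For (4) I would take a fixed $\cC^{r+1}$ bump function $\chi$ on $\cM$ (or on a model ball) equal to $1$ on the ball of radius $\delta$ and supported in the ball of radius $2\delta$, pull it back to each $W_j$ via the exponential chart of $(W_j^+,d_{W_j})$ centered at $x_j$ to get $\chi_j$ supported in $W_j$, and set $\rho_j=\chi_j/\sum_k\chi_k$. The denominator is $\geq 1$ on $T_{\eta,-t}W$ by the covering property and the multiplicity bound makes the sum a $\cC^{r+1}$ function with norm controlled by $C$ and the $\cC^{r+1}$ norm of the charts; the latter is bounded because the charts are $\cC^{r+1}$ with norm depending on $t$ through the $\cC^{r+1}$ size of $T_{\eta,-t}$ composed with a chart of $W^+$ — but crucially $W^+\in\widetilde\Sigma_0$ so its own $\cC^{r+1}$ geometry is uniform, and $T_{\eta,-t}\in\operatorname{Diff}^{r+1}$ with norm depending only on $t$. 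I expect the main obstacle to be precisely the bookkeeping in step (iii)/(4): verifying that the metric balls $W_j^+$ produced inside the curved manifold $T_{\eta,-t}W^+$ genuinely satisfy the nested-closure conditions $\overline{(W_j^+)_r}\subset W_j^+$ for all $r<B\delta$ (which can fail if $T_{\eta,-t}W^+$ is too small or too curved near $\partial W^+$), and in tracking that all the ``constants $C$'' are uniform in $W$ while $\ell$ is allowed to blow up with $t$; choosing $B$ first, then $\delta$ small relative to the uniform geometry of $\widetilde\Sigma_0$, and then restricting $\eta_0$ so \eqref{eq:short-time-est} stays small, resolves this in the standard way.
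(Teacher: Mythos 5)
There is a genuine gap, and it sits exactly at the point which is the raison d'\^etre of this erratum: the small-$t$ regime. You choose your centers $x_j$ as a maximal $\delta$-separated subset of the whole of $T_{\eta,-t}W$ and then take $W_j^+$ to be the intrinsic ball of radius $B\delta$ about $x_j$ inside $T_{\eta,-t}W^+$. For $t$ small this cannot satisfy property (2) (equivalently, such a $W_j^+\subset T_{\eta,-t}W^+$ need not exist at all): by Lemma \ref{lem:good-exp} the image $T_{\eta,-t}W^+$ is an intrinsic ball of radius only about $e^{\varsigma t}B\delta=B\delta+O(B\varsigma t\,\delta)$ around $T_{\eta,-t}x$, while your centers may lie at distance up to roughly $2e^{\Lambda t}\delta\approx 2\delta$ from $T_{\eta,-t}x$. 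The distance from such a center to the edge of $T_{\eta,-t}W^+$ is then only about $(B-2)\delta<B\delta$, so the closed sub-balls $\overline{(W_j^+)_r}$, $r<B\delta$, required by the definition of $\Sigma_0$ leak out of $T_{\eta,-t}W^+$ as $t\to0$. This is not a bookkeeping issue curable by ``$B$ large, then $\delta$ small, then $\eta_0$ small'': the whole construction scales with $\delta$, the available margin is of order $t\delta$ while the deficit created by an off-center choice is of order $\delta$, and the leaves must belong to $\Sigma_0$ with the \emph{fixed} parameters $B,\delta$ uniformly as $t\to0^+$ (for $t$ bounded away from $0$ the original argument of \cite{BuL} already works, so nothing new is needed there). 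Treating ``$t$ in a compact range at a time'' does not help, since the failure is precisely the lack of uniformity down to $t=0$, which is what broke \cite[Lemma 7.2]{BuL} in the first place.

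The paper's proof avoids this by exploiting the linear-in-$t$ margin produced by the guaranteed expansion: the centers are confined to the set $\Omega$ of points of $T_{-t}W$ at distance at most $2(e^{\Lambda t}-e^{-\Lambda t})\delta=O(t\delta)$ from $T_{-t}x$, for which the distance to $\partial T_{-t}W^+$ is at least $e^{\varsigma t}B\delta-O(t\delta)\geq B\delta$ once $B$ is large (both the gain and the loss are of order $t\delta$, and $B$ tips the balance). A Vitali-type maximal separated set in $\Omega$ at scale proportional to $t$ (not $\delta$) then gives finitely many centers whose $B\delta$-balls lie inside $T_{-t}W^+$ and whose $2\delta$-balls still cover $T_{-t}W$, because the radius of $T_{-t}W$ exceeds $2\delta$ only by $O(t\delta)$. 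If you modify step (i) so that the centers are restricted to such an $O(t\delta)$-neighbourhood of the image of the original center, and separate them at scale $\sim t$, your covering, multiplicity and partition-of-unity steps go through essentially as you wrote them; without that modification the construction does not produce leaves of $\Sigma_0$ satisfying (2).
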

\begin{proof}
Theorem \ref{thm:invariance} immediately implies that the $D\tilde T_{-t}\Sigma_0\subset\Sigma_0$. If $W\in \Sigma$, then Lemma \ref{lem:expansion} implies that $DT_{-t}W$ satisfies the second requirement of an admissible leaf for all $t>0$. Unfortunately, $DT_{-t}W$ it may grow too much and fail to satisfy the third condition. In such a case note that the manifolds $W'\in T_{-t}\Sigma$ have a uniformly bounded curvature and, by choosing $\delta$ small enough, we can assume that each one of them belongs to some chart of $\cM$. Then there exists $K>0$ such that the distance between two point on $W'$ are less than $K$ times, and more than $K^{-1}$ times, the Euclidean distance in the chart. If $p$ is a point of the manifold at a distance larger than $\delta$ from the boundary, we can consider, in the chart, an  (Euclidean)-ball of centre $p$ and radius $K\delta$. Let us call $W''$ the intersections of the manifold with such a ball. By construction $W''$ will contain a ball (in the induced Reimannian metric) of size at least $\delta$ (hence satisfying property (2)). On the other hand, the diameter of $W''$ will be less than $K^2\delta$, hence satisfying property (3), provided we have chosen $R\geq K^2$.  The Lemma follows then by an application of \cite[Theorem 1.4.10]{Hor}.
\end{proof}
\subsection{ New Banach spaces and old proofs}\label{sec:new-old}

Let us call $\tilde\cB^{p,q}$ the Banach spaces defined in \cite{BuL} (there called simply $\cB^{p,q}$).
We define new norms exactly as before \cite[(2.3)]{BuL}, but with the set $\Sigma$ of admissible leaves as defined above. The new Banach spaces, that we call $\cB^{p,q}$, are then defined again as the closure of the smooth functions in such norms.

Note that new set $\Sigma$ is contained in the old one (possibly with different parameters), moreover now the conclusions of \cite[Lemma~7.2]{BuL} hold true (it is Lemma \ref{lem:partunity} of the previous section). This means that the proofs of \cite[Lemmata~4.1, 4.2, 4.3]{BuL} hold verbatim. Indeed the only problem in the published proofs was the lack of invariance of $\Sigma$ for small times. 

Let us discuss in detail the situation. The results of \cite{BuL} are contained in \cite[Section 5, 6]{BuL}. The arguments in such sections rest on  \cite[Section 7, 8]{BuL} which, in turn, rest on \cite[Lemma 2.2, Sections 3, 4]{BuL}. We have explained above how to modify the definition of the Banach space contained in \cite[Section 3]{BuL}  while \cite[Lemma 2.2]{BuL} is used exclusively in \cite[Sections  4]{BuL}. In conclusion, \cite{BuL} is correct, provided \cite[Section 4]{BuL} is correct, that is if \cite[Lemmata 4.4, 4.5]{BuL} are correct. Since \cite[Lemma 4.5]{BuL} is a direct consequence of \cite[Lemma 4.4]{BuL}, we are left with the problem of checking the latter. The proof of \cite[Lemma 4.4]{BuL} rests on \cite[Lemma 2.2]{BuL} and, in turn, on \cite[Lemma 2.1]{GL}. 
Unfortunately, even though the space is only slightly different form the one in \cite{GL}, as far as we see \cite[Lemma 2.1]{GL} could be false in the present context. Indeed, the proof in \cite{GL} uses in a crucial way that in each chart the cone field is constant, a property that we no longer have. To overcome this obstacle we bypass \cite[Lemma 2.2]{BuL} and provide a direct proof of \cite[Lemma 4.4]{BuL} following the strategy in \cite{Li}.  This provides a complete proof of the results in\cite{BuL}.

\subsection{ Quasi-compactness of the resolvent}
The following Lemma takes the place of \cite[Lemma 4.4]{BuL} and immediately implies \cite[Lemma 4.5]{BuL}.
\begin{lem}
For each $p\in\bN$, $q\in \bR_+$, $q+p<r$, and $z\in\bC$, $\Re(z)=a>0$ the operator $R(z):\cB^{p,q}\to \cB^{p,q}$ has spectral radius bounded by $a^{-1}$ and essential spectral radius bounded by $(a+\bar p\lambda)^{-1}$.
\end{lem}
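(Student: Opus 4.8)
The plan is to prove the resolvent bounds by combining the standard Lasota--Yorke (Doeblin--Fortet) estimates of \cite[Section 4]{BuL}, which now hold verbatim thanks to Theorem~\ref{thm:7.2}, with a Hennion-type argument applied directly to the resolvent $R(z)=\int_0^\infty e^{-zt}\mathcal{L}_t\,dt$, rather than to the semigroup $\mathcal{L}_t$. First I would record the two basic inequalities available for the transfer operators $\mathcal{L}_t$ acting on $\cB^{p,q}$: a uniform bound $\|\mathcal{L}_t\|_{\cB^{p,q}\to\cB^{p,q}}\leq C$ for $t$ in a bounded interval (and $\|\mathcal{L}_t\|\le C e^{0\cdot t}$ globally, since the flow preserves a smooth volume up to the usual Jacobian factors, so the spectral radius on $\cB^{p,q}$ is $1$), and the Lasota--Yorke estimate
\[
\|\mathcal{L}_t f\|_{\cB^{p,q}} \leq C e^{-\bar p \lambda t}\|f\|_{\cB^{p,q}} + C_t \|f\|_{\cB^{p-1,q+1}},
\]
valid for all $t>0$ now that $\Sigma$ is genuinely invariant. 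Integrating the first against $e^{-at}$ gives the spectral radius bound $\|R(z)\|\le a^{-1}$, and more generally $\|R(z)^n\|\le \|R(a)^n\|\le a^{-n}$ by the resolvent identity and positivity-type arguments, so that $\mathrm{sp}(R(z))\subset\{|w|\le a^{-1}\}$.

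Next I would extract the essential spectral radius bound. Integrating the Lasota--Yorke estimate,
\[
\|R(z) f\|_{\cB^{p,q}} \leq \frac{C}{a+\bar p\lambda}\|f\|_{\cB^{p,q}} + \Big(\int_0^\infty e^{-at}C_t\,dt\Big)\|f\|_{\cB^{p-1,q+1}},
\]
provided the constants $C_t$ grow sub-exponentially in $t$ (which they do, by the explicit dependence of $\ell$ and of the derivative bounds on $t$ in Lemma~\ref{lem:partunity} and Theorem~\ref{thm:7.2}), so the integral converges. This is the key inequality: it expresses $R(z)$ as a sum of an operator with small norm on $\cB^{p,q}$ plus an operator bounded from the weaker space $\cB^{p-1,q+1}$. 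Iterating, $\|R(z)^n f\|_{\cB^{p,q}}\le (a+\bar p\lambda)^{-n}C^n\|f\|+ (\text{poly in }n)\cdot C^n \|f\|_{\cB^{p-1,q+1}}$, after a short bookkeeping using the resolvent identity $R(z)^n = \frac{1}{(n-1)!}\int_0^\infty t^{n-1}e^{-zt}\mathcal{L}_t\,dt$ and the same Lasota--Yorke bound with the factor $e^{-\bar p\lambda t}$. By Hennion's theorem, to conclude $r_{\mathrm{ess}}(R(z))\le (a+\bar p\lambda)^{-1}$ it then suffices that the inclusion $\cB^{p,q}\hookrightarrow\cB^{p-1,q+1}$ be such that the "weak" term is relatively compact, i.e. that bounded subsets of $\cB^{p,q}$ that are mapped by $R(z)$ into $\cB^{p-1,q+1}$ have relatively compact image in $\cB^{p,q}$.

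The main obstacle is precisely this compactness, since — as the paper stresses in Section~\ref{sec:new-old} — the compact embedding $\cB^{p,q}\hookrightarrow\cB^{p-1,q+1}$ (the former \cite[Lemma 2.2]{BuL}, resting on \cite[Lemma 2.1]{GL}) is \emph{not} available with the new, chart-dependent cone field. The way around it, following \cite{Li}, is to prove compactness at the level of the resolvent directly: one shows that $R(z)$ maps the unit ball of $\cB^{p,q}$ into a subset of $\cB^{p-1,q+1}$ that is not merely bounded but \emph{equicontinuous} in an appropriate sense — concretely, that for $f$ in the unit ball of $\cB^{p,q}$, the functions $R(z)f$ have a modulus of continuity (when integrated against test forms on admissible leaves) that is uniform in $f$, because the averaging over time $t\in(0,\infty)$ in the definition of $R(z)$ together with the smoothing of the leaves under $T_{-t}$ regularizes in the stable direction. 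This uniform equicontinuity, combined with the already-available boundedness in the stronger norm, yields via Arzelà--Ascoli that $R(z)(\text{unit ball of }\cB^{p,q})$ is relatively compact in $\cB^{p-1,q+1}$, which is exactly what Hennion's theorem needs. I would carry this out by revisiting the proof of \cite[Lemma 2.1]{GL}, isolating the single place where constancy of the cone field in a chart is used, and replacing the pointwise comparison of leaves there by the $t$-averaged comparison supplied by Lemma~\ref{lem:partunity}(1)--(2) and the derivative control of Theorem~\ref{thm:7.2}; the sub-exponential growth of $C_t$ ensures all the resulting $t$-integrals converge, and letting $\Re(z)=a\to\infty$ (or choosing $p$ suitably) the essential spectral radius bound $(a+\bar p\lambda)^{-1}$ follows.
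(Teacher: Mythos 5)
Your reduction to a Lasota--Yorke inequality for $R(z)$ plus a Hennion/Nussbaum argument is fine and matches the first half of the paper's proof (the spectral radius bound and the estimate $\|R(z)^n\|_{p,q}\le C(a+\bar p\lambda)^{-n}\|\cdot\|_{p,q}+C_n\|\cdot\|_{p-1,q+1}$ indeed follow from the now-valid Lemmata~4.1 and 4.3 of \cite{BuL} via $R(z)^n=\frac1{(n-1)!}\int_0^\infty t^{n-1}e^{-zt}\cL_t\,dt$; the ``positivity'' detour and the remark about letting $a\to\infty$ are unnecessary). The genuine gap is in the compactness step, which is the whole point of this lemma. You propose to ``revisit the proof of \cite[Lemma 2.1]{GL}, isolate the single place where constancy of the cone field in a chart is used, and replace the pointwise comparison of leaves by a $t$-averaged comparison.'' This is precisely the step the paper declares it cannot carry out: with the new, chart-dependent cone field the authors state that \cite[Lemma 2.1]{GL} \emph{could be false}, and the obstruction is geometric (how nearby admissible leaves of $\Sigma$ can be compared inside a chart), not dynamical, so averaging over the time $t$ appearing in $R(z)$ does not by itself produce the equicontinuity you invoke; as written, your Arzel\`a--Ascoli claim is an assertion, not an argument, and the key mechanism is missing.

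What the paper actually does is different and avoids any compact embedding between the new spaces. It uses the \emph{old} spaces $\tilde\cB^{p,q}$ of \cite{BuL} as intermediaries: since the leaves of the old set converge to stable manifolds under $T_{-t}$, there is a $K>0$ such that $\cL_K\in L(\cB^{p,q},\tilde\cB^{p,q})$ and $\cL_K\in L(\tilde\cB^{p-1,q+1},\cB^{p-1,q+1})$. One then splits $R(z)^m=R_{K,m}(z)+Q_{K,m}(z)$, where $R_{K,m}(z)$ is the integral over $t\ge 3K$ and factors as $\cL_K\bigl[\cdots\bigr]\cL_K$ with the bracket acting between the old spaces; there the original compactness argument of \cite{BuL} (resting on \cite[Lemma 2.1]{GL}) is valid because no small times occur, so $R_{K,m}(z):\cB^{p,q}\to\cB^{p-1,q+1}$ is compact. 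The short-time remainder satisfies $\|Q_{K,m}(z)\|_{p,q}\le C_{p,q}K^m/m!$, which is negligible after choosing $m=\delta n$ in the Hennion covering argument. If you want to salvage your write-up, this factorization through $\tilde\cB^{p,q}$ (or some equally concrete replacement for the missing compact embedding) is the ingredient you must supply; without it the essential spectral radius bound is not proved.
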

\begin{proof}
Since the conclusions of \cite[Lemmata~4.1, 4.3]{BuL} hold, the estimate of the spectral radius is as before. It remains to prove the bound on the essential spectral radius.
First of all notice that there exists $K>0$ such that $\cL_t \in L(\cB^{p,q},\tilde\cB^{p,q})$ and $\cL_t \in L(\tilde\cB^{p-1,q+1},\cB^{p-1,q+1})$ for all $t\geq K$.\footnote{ This follows since the set of leaves in \cite{BuL} converge to the stable leaves under the dynamics, hence after some time they will belong to the relevant set.}  From this it follows that  the operators
\[
\begin{split}
R_{K,m}(z)&=\frac 1{(m-1)!}\int_{3K}^\infty t^{m-1}e^{-zt}\cL_t dt\\
&=\cL_K\left[\frac {e^{-2Kz}}{(m-1)!}\int_{K}^\infty (t+2K)^{m-1}e^{-zt}\cL_t dt\right]\cL_K
\end{split}
\]
are compact, as operators in $L(\cB^{p,q},\cB^{p-1,q+1})$. Indeed,  the incorrect proof of the compactness of $R(z)$ in \cite{BuL} holds correct for the operator in square brackets, since no small times are involved, yielding compactness as an operator in $L(\tilde \cB^{p,q},\tilde \cB^{p-1,q+1})$. The result then follows  by the above continuity properties of $\cL_K$. Thus, setting
\[
Q_{K,m}(z)=\frac 1{(m-1)!}\int_{0}^{3K} t^{m-1}e^{-zt}\cL_t dt,
\]
we have $R(z)^m=R_{K,m}+Q_{K,m}$, and \cite[Lemma 4.1]{BuL} implies
\[
\|Q_{K,m}(z)\|_{p,q}\leq C_{p,q} \frac{K^m}{m!}.
\]

We can then conclude by the usual Hennion's argument 
\cite{He} based on Nussbaum's formula \cite{Nu}. Let us recall
the argument. Let $B=\{h\in\cB^{p,q}\;:\;\|h\|_{p,q}\leq 1\}$ and $B_{m}=R_{K,m}(z)B$. By the above discussion $B_{m}$ is compact in $\cB^{p-1,q+1}$. Thus, for each $\epsilon>0$ there are
$h_1,\dots,h_{N_\epsilon}\in B_{m}$ such that
$B_{m}\subseteq\bigcup_{i=1}^{N_\epsilon} 
U_\epsilon(h_i)$, where $U_\epsilon(h_i)=\{h\in\cB\;|\;\|h-h_i\|_{p-1,q+1}<\epsilon\}$. 
For $h\in B_{m}\cap U_\epsilon(h_i)$, \cite[Lemma 4.3]{BuL} implies 
\[
\begin{split}
&\|R(z)^{n}(h-h_i)\|_{p,q}\leq \|R(z)^{n-m}R_{K,m}(h-h_i)\|_{p,q}+C_{p,q} a^{-n+m}\frac{K^m}{m!}\|h-h_i\|_{p,q}\\
&=C_{p,q,\lambda'}(a+\bar p\lambda')^{-n+m}a^{-m}\,
                       +C_{p,q,\lambda',a_0}|z|a^{-n+m}\epsilon+C_{p,q} a^{-n+m}\frac{K^m}{m!} .
\end{split}
\]
Choosing $\epsilon=a^n(a+\lambda\bar p)^{-n+1}$ and $m=\delta n$, for $\delta$ small enough, we conclude that, for each $\lambda''\in(0,\lambda)$,
for each $n\in\bN$  the set $R(z)^nB$ can be covered by a finite number of 
$\|\cdot\|_{p,q}$--balls of radius $C(a+\bar p\lambda'')^{-n}$, which implies that the essential spectral radius of $R(z)$ cannot exceed $(a+\bar p\lambda)^{-1}$.
\end{proof}

\appendix
\section{ Strict hyperbolicity}\label{app:strict}
In this appendix we show that if a flow satisfies Condition \ref{anosov}, then there exists an equivalent smooth metric such that  \cite[Condition 2]{BuL} is satisfied. Note that the content of this appendix is not required for any of the results of the previous sections. 

\begin{lem}\label{lem:metric}
For each $\cC^{r+1}$ Anosov flow there exist a $\cC^{r}$ Riemannian metric ${\|\cdot\|}_{1}$, uniformly equivalent to the original Riemannian metric, and $\sigma>0$ such that
\[
\begin{split}
{\|D_xT_t v\|}_{1} &\geq e^{\sigma t}{\|v\|}_{1}\quad \text{for all } t\geq 0, v\in C^u(x)\\
{\|D_xT_{-t} v\|}_{1} &\geq e^{\sigma t}{\|v\|}_{1}\quad \text{for all }  t\geq 0, v\in C^s(x),
 \end{split}
\]
 where $C^{u}(x)$, $C^{s}(x)$ are invariant unstable and stable conefields, respectively.
\end{lem}
\begin{proof}
By Theorem \ref{thm:invariance} (applied both to $T_t$ and $T_{-t}$) we have the existence of invariant cones $\tilde C^u,\tilde C^s$.
Then, by assumption, there exists $\lambda >0$,  $C>0$ such that
\begin{equation}\label{eq:lambdaestimate}
\begin{split}
&{\|D_xT_t v\|} \geq C e^{\lambda t}{\|v\|}\quad \text{for all } t\geq 0, v\in \tilde C^u(x)\\
&{\|D_xT_{-t} v\|} \geq C e^{\lambda t}{\|v\|}\quad \text{for all } t\geq 0, v\in \tilde C^s(x).
\end{split}
\end{equation}
The construction of the norm is based on the parameter $L>0$, chosen such that $Ce^{{2 \lambda } L}> 1$.
We define the new metric
\[
{\langle v, w\rangle}_{1}=\tfrac{1}{2L}\int_{-L}^{L}\langle DT_s v, DT_s w\rangle ds\;;\quad\quad {\|v\|}_{1}=\sqrt{{\langle v, v\rangle}_{1}}.
\]
We consider the case $v\in C^u:=DT_L \tilde C^u$ and will prove   the first inequality of the lemma.
By the semigroup property of the flow it suffices to prove the statement for $t\in [0,L]$.
\begin{equation}
\begin{split}
{\|DT_t v\|}_{1}^2&=\tfrac{1}{2L} \int_{t-L}^{t+L}\|DT_s v\|^2 ds\\
&={\|v\|}_{1}^2+ \tfrac{1}{2L} \int_{0}^{t}\|DT_{s+{L}} v\|^2 ds-  \tfrac{1}{2L} \int_{0}^{t}\|DT_{s-L} v\|^2 ds\\
&\geq {\|v\|}_{1}^2+  \tfrac{1}{2L}  \int_{0}^{t} (C^{2}e^{4\lambda L} - 1   )   \|DT_{s-L} v\|^2 ds.
\end{split}
\end{equation}
where, in the last line, we used \eqref{eq:lambdaestimate} and the definition of $C^u$.
Note that, for each $L>0$ there exists $C_{L}>0$ such that
\[
{C_{L}}^{-1} {\|v\|}_{1}   \leq  \norm{ DT_{t} v } \leq C_{L}  {\|v\|}_{1} 
\quad \text{for all $v$ and for all $t\in [-L,L]$}.
\]
Combining this with the previous estimate we have shown that, for all $t\in [0,L]$,
\begin{equation}\label{eq:theend}
{\|DT_t v\|}_{1}^2
\geq \left( 1 + t \left[ \frac{ C^{2}e^{4\lambda L} - 1   }{2L C_{L}^{2}}  \right] \right) {\| v\|}_{1}^2.
\end{equation}
Since $C^{2}e^{4\lambda L} - 1 >0$ (because $L>0$ was chosen appropriately) this means that there exists $\sigma>0$ such that $ {\|DT_t v\|}_{1} \geq e^{\sigma t}{\|v\|}_{1} $ as required.

For the other inequality we can argue in complete analogy with the above computation but with time reversed. 
\end{proof}

What we have done is to find a norm for which \eqref{eq:lambdaestimate} holds with $C=1$ (with a different $\lambda>0$). This is very similar to what is proven in \cite{mather} for the case of Anosov diffeomorphisms but here there are two (closely related) differences: 1) the metric $\langle\cdot,\cdot\rangle_1$ is $\cC^r$ rather than just H\"older as in \cite{mather}; 2) contrary to \cite{mather} the new distributions are not orthogonal in the new norms. The latter is annoying but inevitable if one wants a smooth norm. 

Note that, in the above proof, $\sigma$ cannot be taken arbitrarily close to $\lambda$, contrary to \cite{mather}. Indeed \eqref{eq:theend} suggests that typically $\sigma$ will be much smaller than $\lambda$. This is the price of the na\"\i vity of our construction and the requirement that the metric be smooth.

\end{document}